\newtheorem{theorem}{Theorem}
\newtheorem*{theorem*}{Theorem}
\newtheorem{lemma}{Lemma}[section]
\newtheorem{definition}{Definition}
\newtheorem*{example}{Example}
\newtheorem*{proposition}{Proposition}
\newtheorem*{corollary}{Corollary}
\newtheorem*{keywords}{Keywords}
\newtheorem{remark}{Remark}[section]
\newcommand{\Z}{\mathbb{Z}}
\newcommand{\eeone}{EE^{(1)}}
\newcommand{\eetwo}{EE^{(2)}}
\newcommand{\eethree}{EE^{(3)}}
\renewcommand{\leq}{\leqslant}
\newcommand{\z}{\left}
\newcommand{\y}{\right}
\title{Counting Permutations in $S_{2n}$ and $S_{2n+1}$}
\author{Yuewen Luo}
\affil{\small Department of Mathematics, University of Toronto, \texttt{yuewen.luo@mail.utoronto.ca}}
\date{\today}
\begin{document}

\maketitle

\begin{abstract} 
 Let $\alpha(n)$ denote the number of perfect square permutations in the symmetric group $S_n$. The conjecture $\alpha(2n+1) = (2n+1) \alpha(2n)$, provided by Stanley\cite{problem}, was proved by Blum\cite{Blum} using a generating function. This paper presents a combinatorial proof for this conjecture. At the same time, we demonstrate that all permutations with an even number of even cycles in both $S_{2n}$ and $S_{2n+1}$ can be categorized into three distinct types that correspond to each other.\\
\begin{keywords}
    Permutation, disjoint cycles, bijection, mapping, counting.
\end{keywords}
\end{abstract}

\section{Introduction}
\setstretch{1.5}

Let $n\in \Z^+$, $[n] = \{1,2,...,n\}.$ We know that every permutation in $S_n$ can be written as a product of disjoint cycles. Given $m$ distinct numbers $a_1, a_2, ..., a_m$ in $[n]$, $(a_1, a_2, ... ,a_m)$ denotes the $m$-cycle $ a_1\rightarrow a_2\rightarrow ... \rightarrow a_m \rightarrow a_1$. \\

To begin with, let us recall the notions provided by Stanley\cite[page 13]{problem}. Let $w\in S_n$. We say that $w$ is a \textbf{perfect square permutation} if there exists $u\in S_n$ such that $u^2 = w$. We define $\alpha(n)$ to be the number of perfect square permutations in $S_n$. Some direct enumerations are provided in the table below\cite{oeis}.
\vspace{1em}
\begin{table}[!h]
\centering
\small 
\setlength{\tabcolsep}{8pt} 
\begin{tabular}{|c|c|}
\hline
$n$ & $\alpha(n)$ \\ \hline
2 & 1 \\ \hline
3 & 3 \\ \hline
4 & 12 \\ \hline
5 & 60 \\ \hline
6 & 270 \\ \hline
7 & 1890 \\ \hline
8 & 14280 \\ \hline
9 & 128520 \\ \hline
10 & 1096200 \\ \hline
11 & 12058200 \\ \hline
12 & 139043520 \\ \hline
13 & 1807565760 \\ \hline
14 & 22642139520 \\ \hline
15 & 339632092800 \\ \hline
16 & 5237183952000\\ \hline
17 & 89032127184000\\ \hline
\end{tabular}
\caption{Values for $\alpha(n)$}
\label{tab:values}
\end{table}

\vspace{2em}

By observing Table 1, it seems to be true that $\alpha(2n+1) = (2n+1)\alpha(2n)$ for all $n\in \Z^+$. This suggests an intriguing relationship between the numbers of perfect square permutations in $S_{2n}$ and $S_{2n+1}$. In particular, the ratios of the numbers of perfect square permutations in $S_{2n}$ and $S_{2n+1}$ are the same. In 1973, Blum\cite{Blum} proved this conjecture using a generating function. In this paper, we give a combinatorial proof. 

In order to enhance clarity and ease of understanding, a table summarizing some basic notations used in this paper is provided below. The notations will be used consistently throughout the paper to simplify various expressions. If additional notations are introduced later, they will be explained in context.

\vspace{1em}

\begin{table}[H]
    \centering
    \begin{tabular}{>{\raggedright}p{3cm}|>{\raggedright\arraybackslash}p{8cm}}
        \textbf{Notations} & \textbf{Descriptions} \\\hline
        $\underline{PS}$ & Perfect square permutation(s) \\
        $\underline{NPS}$ & Non-perfect square permutation(s) \\
        $\underline{EE}$ & Permutation(s) with an even number of even cycles \\
        $\underline{OE}$ & Permutation(s) with an odd number of even cycles \\
        $\underline{PS_n}$ & The set of perfect square permutations in $S_n$\\
        $\underline{NPS_n}$ & The set of non-perfect square permutations in $S_n$\\
        $\underline{EE_{n}}$ & The set of permutations in $S_n$ with an even number of even cycles  \\
        $\underline{OE_{n}}$ & The set of permutations in $S_n$ with an odd number of even cycles  
    \end{tabular}
\caption{\label{tab:notations}Basic notations used in this paper.}
\end{table}

\subsection{Basic Properties of Perfect Square Permutations} \label{propertiesPS}
\setstretch{1.5}
\begin{lemma}\label{basic_property}
    Let $n, m\in \Z^+$ and $m\leq n$. Let $u=(a_1, a_2, ... ,a_m)$ be an $m$-cycle in $S_n$. 
    \begin{enumerate}
        \item If $m$ is even, then $u^2$ gets split into two disjoint cycles with the same length, $\frac{m}{2}$.
        \item If $m$ is odd, then $u^2$ remains an $m$-cycle. 
    \end{enumerate}
\end{lemma}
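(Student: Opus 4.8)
The plan is to compute $u^2$ directly from the description of $u$ as the function on $[n]$ that sends $a_i \mapsto a_{i+1}$ (with the subscripts read modulo $m$, so that $a_{m+1}=a_1$) and fixes every element of $[n]\setminus\{a_1,\dots,a_m\}$. Then $u^2$ fixes the same elements and sends $a_i \mapsto a_{i+2}$, again with subscripts modulo $m$. So the whole statement reduces to understanding the orbits of the shift-by-$2$ map on the index set $\Z/m\Z$, i.e.\ to the order of the element $2$ in the group $\Z/m\Z$, which is $m/\gcd(2,m)$.

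First I would record this elementary fact: starting from any index $i$ and repeatedly adding $2$ modulo $m$, one returns to $i$ after exactly $m/\gcd(2,m)$ steps, having visited that many distinct indices. The two parts of the lemma are then just the two cases $\gcd(2,m)\in\{1,2\}$.

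For the odd case we have $\gcd(2,m)=1$, so the orbit of $a_1$ under $u^2$ has size $m$; concretely it is $a_1, a_3, a_5, \dots, a_m, a_2, a_4, \dots, a_{m-1}$, and then back to $a_1$. Hence $u^2=(a_1,a_3,a_5,\dots,a_m,a_2,a_4,\dots,a_{m-1})$ is a single $m$-cycle on $\{a_1,\dots,a_m\}$, and no other point of $[n]$ is moved. For the even case we have $\gcd(2,m)=2$, so each orbit of $u^2$ inside $\{a_1,\dots,a_m\}$ has size $m/2$; the orbit of $a_1$ is the set of odd-subscript elements $\{a_1,a_3,\dots,a_{m-1}\}$ and the orbit of $a_2$ is the set of even-subscript elements $\{a_2,a_4,\dots,a_m\}$, and these are disjoint and together exhaust $\{a_1,\dots,a_m\}$. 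Therefore $u^2=(a_1,a_3,\dots,a_{m-1})(a_2,a_4,\dots,a_m)$ is a product of two disjoint cycles, each of length $m/2$, as claimed.

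There is no real obstacle in this argument; it is a standard fact about powers of a single cycle. The only thing requiring a little care is the bookkeeping of subscripts modulo $m$, and I would make the statement "the element $2$ has order $m/\gcd(2,m)$ in $\Z/m\Z$" explicit up front so that both parts of the lemma drop out by substituting $\gcd(2,m)=1$ and $\gcd(2,m)=2$ respectively. If desired, one can also sanity-check the conclusion against the fact that $u^2$ has the same sign as... no, rather against $\operatorname{ord}(u^2)=m/\gcd(2,m)$, which matches a single $m$-cycle when $m$ is odd and two $(m/2)$-cycles when $m$ is even.
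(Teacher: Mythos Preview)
Your argument is correct and follows essentially the same approach as the paper: both compute $u^2$ explicitly as the map $a_i\mapsto a_{i+2}$ (subscripts mod $m$) and read off the resulting cycle decomposition, with the paper simply stating the two formulas and your version adding the justification via the orbit size $m/\gcd(2,m)$ in $\Z/m\Z$. There is nothing to add or correct.
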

    \begin{proof}
        It is readily checked that for all $k\in \Z^+$,
        \begin{align*}
            (a_1, a_2, ... ,a_{2k})^2&= (a_1, a_3, ..., a_{2k-1})(a_2, a_4, ..., a_{2k}),\\ 
            (a_1, a_2, ... ,a_{2k+1})^2&= (a_1, a_3, ..., a_{2k+1}, a_2, a_4, ..., a_{2k}).
        \end{align*}
    \end{proof}

\begin{remark}\label{oddPS}
    Every odd cycle is a perfect square. 
\end{remark}

\begin{lemma}[Necessary Condition] \label{PSisEE}
Let $n\in \Z^+$. If $w\in PS_n$, then $w\in EE_n$.
\end{lemma}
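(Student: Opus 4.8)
The plan is to exploit Lemma \ref{basic_property} together with the fact that disjoint cycles commute, so that squaring a permutation can be performed cycle by cycle. Concretely, suppose $w \in PS_n$, so $w = u^2$ for some $u \in S_n$, and let $u = u_1 u_2 \cdots u_r$ be the decomposition of $u$ into disjoint cycles (one may include the fixed points of $u$ as $1$-cycles or not; it is irrelevant here). Since the $u_i$ have pairwise disjoint supports they commute, so $w = u_1^2 u_2^2 \cdots u_r^2$, and the $u_i^2$ again act on pairwise disjoint supports. Hence the disjoint cycle decomposition of $w$ is exactly the concatenation of the disjoint cycle decompositions of the individual $u_i^2$.

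Next I would count the even cycles of $w$ by inspecting each $u_i$ and invoking Lemma \ref{basic_property}. If $u_i$ has odd length, then $u_i^2$ is a single cycle of that same odd length, contributing no even cycle to $w$. If $u_i$ has even length $2k$, then $u_i^2$ splits into two cycles, each of length $k$; these are both even cycles when $k$ is even, and both odd cycles when $k$ is odd. In every case $u_i$ contributes an \emph{even} number of even cycles to $w$ (namely $0$ or $2$). Summing over $i = 1, \dots, r$, the total number of even cycles of $w$ is a sum of even numbers, hence even, i.e. $w \in EE_n$.

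The argument is short, so I do not expect a serious obstacle; the one point deserving care is the reduction that "squaring commutes with passing to the disjoint cycle decomposition," i.e. that an even cycle of $w$ can only arise as one of the two equal halves of an even cycle of $u$. This is exactly what the commutativity of disjoint-support cycles delivers, and once it is stated cleanly the parity bookkeeping via Lemma \ref{basic_property} is immediate. I would also note in passing that the converse is false in general — not every permutation in $EE_n$ is a perfect square — so this lemma supplies only the "necessary" half of the eventual characterization of $PS_n$.
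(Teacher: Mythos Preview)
Your proposal is correct and follows essentially the same approach as the paper: decompose $u$ into disjoint cycles, square cycle by cycle, and invoke Lemma~\ref{basic_property} to see that even cycles of $w$ arise in pairs. Your write-up is in fact more explicit than the paper's (you spell out the commutativity step and the $0$-or-$2$ count per factor), but the underlying argument is the same.
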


    \begin{proof}
    Assume $w=u^2$ for some $u\in S_n$. We write $u$ as a product of disjoint cycles $$u = \underset{i\in [k]}{\prod}u_i$$ for some $k\in \Z^+$ and $ k\leq n$.

    By \hyperref[basic_property]{Lemma 1.1}, $\underset{i\in [k]}{\prod}u_i^2$ are still disjoint and even cycles are in pairs.  
    \end{proof}

\begin{remark}\label{PSdependence}
     Since all odd cycles are perfect squares (by \hyperref[oddPS]{Remark 1.1}), the determination of whether a permutation is $PS$ or $NPS$ depends solely on the partition of its even cycles - specifically, the number of even cycles with the same length in $PS$ must be even. Some examples are provided below. 

   \begin{example} [$PS$ and $NPS$ in $EE_n$]
         \begin{align*}
             &(1,2)(3,4)(5)(6)(7)(8) \in PS_8\\
             &(1,2)(3,4,9,10)(5)(6)(7)(8) \in NPS_{10}\\
             &(1,2)(3,4)(5,6,7,8)(9,10,11,12)(13) \in PS_{13}\\ 
             &(1,2,3,4,5,6)(7,8)(9)\in NPS_9.
         \end{align*}
    \end{example}
\end{remark}

\subsection{Permutations with Even number of Even Cycles}

By the necessary condition discussed in \hyperref[PSisEE]{Lemma 1.2}, we may focus only on permutations with an even number of even cycles. A direct bijective proof is tempting, but after considering various partitions, we decide to divide both $EE_{2n}$ and $EE_{2n+1}$ into three different types, and then construct bijections between corresponding types.

Furthermore, now that $ PS_{n} \subset EE_n$, we can analyze the perfect square permutations within each type. We will later show that it is possible to establish bijections between  $[2n+1] \times PS_{2n}$ and $PS_{2n+1}$ for each type. 

\vspace{1em}

\begin{definition}\label{2n_types}
    Let $n\in \Z^+$.
    We divide $EE_{2n}$ into three \textbf{distinct} types: 
    \begin{itemize}
        \item Type $1$, denoted $\eeone_{2n}:= \{w\in EE_{2n}:w$  only has even cycles\}. 
        \item Type $2$, denoted $\eetwo_{2n}:= \{w \in EE_{2n}:w$  only has odd cycles\}.
        \item Type $3$, denoted $\eethree_{2n}:= \{w\in EE_{2n}:w$  has both even and odd cycles\}.
    \end{itemize}
    Note that: 
    $$\z|EE_{2n}\y| = \underset{i\in [3]}{\sum} \z|EE_{2n}^{(i)}\y|.$$
\end{definition}

\vspace{1em}

\begin{definition}\label{2n+1_types}
    Let $n\in \Z^+$.
    We divide $EE_{2n+1}$ into three \textbf{distinct} types:
    \begin{itemize}
        \item Type $1$, denoted $\eeone_{2n+1}:=\{w\in EE_{2n+1}:w$  only has even cycles and one $1$-cycle\}. 
        \item Type $2$, denoted $\eetwo_{2n+1}:=\{w\in EE_{2n+1}:w$ only has odd cycles\}.
        \item Type $3$, denoted $\eethree_{2n+1}:=\{w\in EE_{2n+1}:w$ has both even and odd cycles\}$\backslash \eeone_{2n+1}$.
    \end{itemize}
    Note that: 
     $$\z|EE_{2n+1}\y| = \underset{i\in [3]}{\sum} \z|EE_{2n+1}^{(i)}\y|.$$
\end{definition}

\vspace{1em}

\begin{definition}\label{definition3}
    Let $i\in [3]$ and $n\in \Z^+$.
    Define $PS_n^{(i)}:= EE_n^{(i)} \cap PS_n$.
\end{definition}

\vspace{1em}

We are now ready to state our main results.
\setstretch{1.5}

\newtheorem*{theorem:1}{Theorem \ref{theorem:1}}
\begin{theorem:1}
    Let $n\in \Z^+$ and $i\in [3]$. Then
    $$\mathbf{\z|EE^{(i)}_{2n+1}\y| = (2n+1)\z|EE^{(i)}_{2n}\y|}.$$
\end{theorem:1}

\newtheorem*{theorem:2}{Theorem \ref{theorem:2}}
\begin{theorem:2}
    Let $n\in \Z^+$ and $i\in [3]$. Then
    $$\mathbf{\z|PS^{(i)}_{2n+1}\y| = (2n+1)\z|PS^{(i)}_{2n}\y|}.$$
\end{theorem:2}

\begin{corollary}\label{corollary}
    Let $n\in \Z^+$. Then
    $$\mathbf{\alpha(2n+1) = (2n+1)\alpha(2n)}.$$
\end{corollary}

\section{Adding and Swapping Mapping Method}\label{addingswapping}
\setstretch{1.5}
This paper employs a special mapping method that plays an important role in the proofs of the theorems presented later. In this section, we explain how the Adding and Swapping Mapping Method works.

Let $n\in \Z^+$ and $w\in S_{2n}$ with \textbf{no} $1$-cycle. This mapping method maps $w$ to $2n+1$ \textbf{unique} permutations in $S_{2n+1}$.

Define: $\begin{cases}
D_{2n+1}, & \text{by adding } (2n+1) \text{ outside as an 1-cycle, i.e. the identity map}.\\
D_i \ (1 \leq i \leq 2n), & \text{by adding } (2n+1) \text{ outside as an 1-cycle and then swapping the position } \\
& \text{of } (2n+1) \text{ with } i.\\
\end{cases}$

\begin{example}
    Take $w=(1,2)(3,4,5,6)\in S_6$. Then 
    \begin{align*}
        D_1(w)&=(7,2)(3,4,5,6)(1)\\
        D_2(w)&=(1,7)(3,4,5,6)(2)\\
        D_3(w)&=(1,2)(7,4,5,6)(3)\\
        D_4(w)&=(1,2)(3,7,5,6)(4)\\
        D_7(w)&=(1,2)(3,4,5,6)(7).
    \end{align*}
\end{example}

\begin{remark}
    For our purpose, we define the \nameref{addingswapping} from $S_{2n}$ to $S_{2n+1}$. Indeed, this mapping method can be defined from $S_n$ to $S_{n+1}$, for all $n\in \Z^+$.  
\end{remark}

\section{Prototypes}
\setstretch{1.5}

In this section, we introduce some simple lemmas that serve as prototypes for Theorem 1 and Theorem 2. 

\begin{lemma}\label{EE=OE}
    Let $n\in \Z^+$, $n>1$. Then $|EE_n| = |OE_n|$.
\end{lemma}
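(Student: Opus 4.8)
The plan is to prove $|EE_n| = |OE_n|$ for $n > 1$ by exhibiting an explicit involution-free bijection between the two sets, using the Adding and Swapping machinery only insofar as it suggests the right local modification of a single cycle. The cleanest route is to find one distinguished cycle in each permutation whose length-parity we can flip, thereby toggling the parity of the number of even cycles, while leaving everything reversible.

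First I would fix a canonical choice of cycle to modify: among all cycles of $w \in S_n$, single out the unique cycle $C$ containing the largest element $n$. Write $w = C \cdot w'$ where $w'$ acts on the remaining points. The idea is to change the length of $C$ by exactly one — lengthening it or shortening it — so that its contribution to the even-cycle count changes parity; since no other cycle is touched, this flips membership between $EE_n$ and $OE_n$. Concretely, if $C = (n, a_1, \dots, a_{k-1})$ has length $k \geq 2$, one natural operation is to split off the element $n$ into its own $1$-cycle when doing so is "legal," and the inverse operation absorbs a $1$-cycle into an adjacent cycle; but to get a genuine bijection on all of $S_n$ one must be careful about the case where $C$ is already a $1$-cycle and about which $1$-cycle to absorb. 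A robust fix is: let $m$ be the largest element lying in a $1$-cycle if one exists, compare it with $n$, and use the relative order of $n$ and $m$ (together with whether $n$ itself is fixed) to decide deterministically whether to "merge" or "split," in the style of the classical proof that the number of partitions of $n$ into an even number of parts... — here, into the analogous statement for permutations.

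Alternatively, and perhaps more in keeping with Section~\ref{addingswapping}, I would argue by a direct sign/counting identity: the number of even cycles of $w$ has a fixed parity iff $\operatorname{sgn}$ restricted to... — more precisely, I would consider the generating-function-free identity $\sum_{w \in S_n} (-1)^{(\#\text{even cycles of } w)} = 0$ for $n > 1$, which is exactly $|EE_n| - |OE_n| = 0$, and prove it by the exponential-formula-style recursion or, combinatorially, by pairing $w$ with the permutation obtained from the above cycle-surgery. For the write-up I would present the bijection $\Phi: EE_n \to OE_n$ explicitly, check $\Phi$ is well-defined (the modified object is still a permutation of $[n]$ and its even-cycle count has the opposite parity), exhibit the inverse map by the symmetric recipe, and conclude.

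The main obstacle I anticipate is handling the boundary cases of the cycle-surgery so that the map is a clean bijection rather than just an injection on a subset: specifically, deciding unambiguously — from data intrinsic to $w$ — whether to merge a $1$-cycle into another cycle or to split an element off, and verifying this decision rule is exactly inverted by the corresponding rule on the image. The hypothesis $n > 1$ is presumably needed precisely because for $n = 1$ there is no room to perform any length-one surgery ($S_1$ has one element, all cycles odd), so the argument must visibly use $n \geq 2$; I would make sure the chosen recipe degenerates correctly and flag where $n>1$ enters.
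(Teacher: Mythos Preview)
The paper does not actually prove this lemma; it is ``left to the reader'' with a citation to an external bijective argument. So there is no in-paper proof to compare against directly, but your proposal is working much harder than the intended ``simple proof'' and, as written, does not arrive at a complete argument.

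You brush against the key fact and then veer away from it: the parity of the number of even cycles of $w$ \emph{is} the sign of $w$, since a $k$-cycle has sign $(-1)^{k-1}$ and hence
\[
\operatorname{sgn}(w)\;=\;\prod_{C\text{ a cycle of }w}(-1)^{|C|-1}\;=\;(-1)^{\#\{\text{even cycles of }w\}}.
\]
Thus $EE_n=A_n$ and $OE_n=S_n\setminus A_n$, and for $n>1$ left multiplication by the transposition $(1\,2)$ is a bijection (indeed an involution) between them. The hypothesis $n>1$ is used exactly to guarantee that $(1\,2)$ exists. This is almost certainly the one-line argument the paper has in mind, and it is also what your aborted sentence about $\operatorname{sgn}$ and the identity $\sum_{w}(-1)^{\#\text{even cycles}}=0$ was reaching for.

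Your cycle-surgery plan---split $n$ off its cycle, or absorb it into another---runs into precisely the boundary trouble you anticipate and do not resolve: once $n$ is a fixed point you must choose a target cycle and an insertion position to merge it into, and that choice must be recoverable from the image alone; your sketch of comparing $n$ with ``the largest fixed point $m$'' is not yet a well-defined rule (e.g.\ when $n$ is itself the unique fixed point). All of this is repairable with enough bookkeeping, but the sign observation above makes it unnecessary.
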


The simple proof of this lemma is left to the reader. A bijective proof is provided in \cite{solution}.

\begin{lemma}\label{basiclemma}
    Let $n\in \Z^+$. Then $\z|EE_{2n+1}\y| = (2n+1)\z|EE_{2n}\y|$.
\end{lemma}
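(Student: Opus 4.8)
The plan is to use the Adding and Swapping Mapping Method from Section \ref{addingswapping} to build an explicit $(2n+1)$-to-$1$ correspondence, or rather to exhibit a bijection $EE_{2n+1} \longleftrightarrow [2n+1] \times EE_{2n}$. The natural idea is: given $w \in EE_{2n+1}$, locate the cycle containing the symbol $2n+1$, ``delete'' $2n+1$ from that cycle to land in $S_{2n}$, and remember where it was; conversely, given a permutation in $S_{2n}$ together with an index in $[2n+1]$, reinsert $2n+1$ via the appropriate map $D_i$. The key point that makes this preserve the $EE$ property is Lemma \ref{basic_property}: removing one element from a cycle changes that cycle's length by one, hence flips its parity (even $\leftrightarrow$ odd), so it changes the number of even cycles by exactly one — unless the cycle was a $1$-cycle, in which case it simply disappears and the count of even cycles is untouched. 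So I must be careful to split into cases according to whether $2n+1$ sits in a $1$-cycle.

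First I would set up the map $\Phi \colon EE_{2n+1} \to [2n+1] \times EE_{2n}$ carefully. Write $w \in EE_{2n+1}$ and let $C$ be the cycle containing $2n+1$. Case (a): if $C = (2n+1)$ is a $1$-cycle, then $w = w' \cdot (2n+1)$ where $w' \in S_{2n}$ has the same even-cycle count as $w$, so $w' \in EE_{2n}$; send $w \mapsto (2n+1, w')$. Case (b): if $C$ has length $\geq 2$, then $C = (2n+1, a_2, \ldots, a_m)$ for some $m \geq 2$; this cycle has parity opposite to the $(m-1)$-cycle obtained by deleting $2n+1$, so deleting it flips the even-cycle count by one. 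Since $w$ had an even number of even cycles, to stay in $EE_{2n}$ I cannot simply delete: instead I should think of this as the inverse of $D_i$ for the appropriate $i$. Concretely, $D_i(v)$ takes $v \in S_{2n}$ with no $1$-cycle, adjoins $(2n+1)$, then transposes the labels $2n+1$ and $i$; this has the effect of splicing $2n+1$ into the cycle of $v$ that contains $i$, immediately before $i$. So the inverse reads off $i$ (the successor of $2n+1$ in its cycle of $w$) and removes $2n+1$ from that cycle, yielding $v \in S_{2n}$. I then send $w \mapsto (i, v)$. The subtlety is that $v$ may now have a $1$-cycle (if the cycle of $w$ containing $2n+1$ had length exactly $2$), whereas the $D_i$'s were only defined on permutations with no $1$-cycle — so the target of $\Phi$ is genuinely all of $[2n+1] \times EE_{2n}$ and I need the inverse construction to accept arbitrary elements of $EE_{2n}$, reinserting $2n+1$ before position $i$ regardless of whether $i$ sat in a $1$-cycle. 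A clean way to handle this uniformly: for $(i, v) \in [2n] \times EE_{2n}$, define $\Psi(i,v)$ to be the permutation of $[2n+1]$ in which $2n+1$ is inserted immediately before $i$ in $v$'s cycle structure (so if $i = (i)$ was a fixed point of $v$, it becomes the $2$-cycle $(2n+1, i)$); and for $i = 2n+1$, set $\Psi(2n+1, v) = v \cdot (2n+1)$.

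Next I would verify the two things that make this work: (i) $\Psi(i,v) \in EE_{2n+1}$ for all $(i,v) \in [2n+1]\times EE_{2n}$, and (ii) $\Phi$ and $\Psi$ are mutually inverse. For (i): when $i = 2n+1$ the even-cycle count is unchanged; when $i \leq 2n$, inserting $2n+1$ before $i$ replaces the cycle of length $\ell$ containing $i$ by one of length $\ell+1$, flipping its parity and hence changing the even-cycle count by $\pm 1$ — wait, that would break evenness, so I need to recheck the direction. The resolution is that $\Phi$ in case (b) also changed the count by one in the opposite direction, so round-trip consistency is automatic; but to see $\Psi$ lands in $EE$, the honest statement is that $\Psi$ maps $OE_{2n}$-type data… no. Let me instead organize the count by total symbols: $w \in S_{2n+1}$ always has an \emph{odd} total, so it has an odd number of even cycles plus… this parity bookkeeping is exactly what Lemma \ref{basic_property} and the $EE$/$OE$ dichotomy (Lemma \ref{EE=OE}) are there to streamline, and the cleanest route is: observe that a permutation of an odd set $[2n+1]$ lies in $EE_{2n+1}$ iff removing the element $2n+1$ from its cycle (deleting the cycle if it was a fixed point) lands in $OE_{2n}$ when $2n+1$ was in a cycle of length $\geq 2$, and in $EE_{2n}$ when $2n+1$ was fixed. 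So the map to build is really $EE_{2n+1} \to \big(\{2n+1\}\times EE_{2n}\big) \sqcup \big([2n] \times OE_{2n}\big)$, and then I invoke Lemma \ref{EE=OE} to replace $[2n]\times OE_{2n}$ by $[2n]\times EE_{2n}$, assembling $[2n+1]\times EE_{2n}$ with the right cardinality.

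I expect the main obstacle to be precisely this parity accounting — pinning down exactly when removing/inserting $2n+1$ toggles between $EE$ and $OE$ versus leaves the class fixed, and then using Lemma \ref{EE=OE} to convert the ``$OE$ part'' of the fiber back into an ``$EE$ part'' so that the final count is $(2n+1)|EE_{2n}|$ rather than $|EE_{2n}| + 2n\,|OE_{2n}|$. Everything else — that deleting $2n+1$ from a length-$m$ cycle and reinserting it before a chosen successor are mutually inverse operations — is routine. Once the bijection $EE_{2n+1} \longleftrightarrow \big(\{2n+1\}\times EE_{2n}\big)\sqcup\big([2n]\times OE_{2n}\big)$ is in hand, Lemma \ref{EE=OE} gives $|EE_{2n+1}| = |EE_{2n}| + 2n\,|OE_{2n}| = |EE_{2n}| + 2n\,|EE_{2n}| = (2n+1)|EE_{2n}|$, as desired. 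I would remark that this lemma is the ``unrestricted'' prototype of Theorem \ref{theorem:1}: the refinement to each type $EE^{(i)}$ will require checking that the insertion/deletion of $2n+1$ respects the type decomposition, which is why Theorem \ref{theorem:1} needs a more careful, type-by-type version of the same construction rather than following formally from this lemma.
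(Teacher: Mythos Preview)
Your final argument is correct: deleting $2n+1$ from its cycle gives a bijection
\[
EE_{2n+1}\;\longleftrightarrow\;\bigl(\{2n+1\}\times EE_{2n}\bigr)\ \sqcup\ \bigl([2n]\times OE_{2n}\bigr),
\]
and then Lemma~\ref{EE=OE} converts the $OE$ factor. However, this is considerably more work than the paper does. The paper's proof of Lemma~\ref{basiclemma} is essentially one line: Lemma~\ref{EE=OE} already gives $|EE_m|=m!/2$ for every $m>1$, so $|EE_{2n+1}|=(2n+1)!/2=(2n+1)\cdot(2n)!/2=(2n+1)|EE_{2n}|$. No bijection, no case split on whether $2n+1$ is fixed, no parity-tracking --- just divide. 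Your route rediscovers the same numerical input (Lemma~\ref{EE=OE}) but wraps it in an explicit insertion/deletion map; what that buys you is a concrete correspondence rather than a bare equality of cardinalities, and as you note it is a warm-up for the type-by-type refinement in Theorem~\ref{theorem:1}. What the paper's route buys is brevity: this lemma is meant as a trivial prototype, and the real combinatorial content is deferred to Section~\ref{proof of theorem 1}. A minor presentational point: the long detour in your proposal where you first try for a direct map into $[2n+1]\times EE_{2n}$, notice the parity flip, and backtrack, should be excised in a final write-up --- go straight to the $EE/OE$ decomposition.
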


This can be proved easily by realizing $EE$ and $OE$ are equally distributed in $S_n$ (\hyperref[EE=OE]{Lemma 3.1}). 

\begin{remark}\label{remark3.1}
    The lemmas above also confirm that $|OE_{2n+1}| = (2n+1)|OE_{2n}|$.
\end{remark}

\section{Proof of Theorem 1}\label{proof of theorem 1}
\setstretch{1.5}
In this section, we prove Theorem 1. 

\begin{theorem}
\label{theorem:1}
    Let $n\in \Z^+$ and $i\in [3]$. Then
    $$\mathbf{\z|EE^{(i)}_{2n+1}\y| = (2n+1)\z|EE^{(i)}_{2n}\y|}.$$
\end{theorem}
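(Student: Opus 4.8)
The plan is to prove the identity type by type, constructing for each $i\in[3]$ an explicit bijection between $[2n+1]\times EE^{(i)}_{2n}$ and $EE^{(i)}_{2n+1}$, using the Adding and Swapping Mapping Method from Section \ref{addingswapping} as the main engine. The delicate point is that the Adding and Swapping method as stated requires the input permutation in $S_{2n}$ to have no $1$-cycle, so the three types must be handled with genuinely different bookkeeping.

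For Type $2$, the permutations consist only of odd cycles, so they never have $1$-cycles unless the $1$-cycle itself is one of the odd cycles — and indeed here $1$-cycles are allowed. First I would observe that adding a new $1$-cycle $(2n+1)$ to an element of $EE^{(2)}_{2n}$ preserves the parity of the number of even cycles (it adds none), so it lands in $EE^{(2)}_{2n+1}$; then the $2n+1$ choices correspond to either leaving $(2n+1)$ as its own $1$-cycle or inserting $2n+1$ into one of the $2n$ existing positions, and one checks this insertion keeps all cycles odd in length (inserting one element into an odd cycle makes it even!). So the naive map fails here, and the correct Type $2$ argument should instead use that $EE^{(2)}$ counts permutations all of whose cycles are odd, which is a classical object, and directly invoke Lemma \ref{basiclemma}-style reasoning restricted to all-odd permutations, or equivalently note $EE^{(2)}_{m}$ equals the number of permutations of $[m]$ with all cycle lengths odd and run the standard $[2n+1]\times(\cdot)$ recursion on that. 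For Type $1$, an element of $EE^{(1)}_{2n}$ has only even cycles (hence no $1$-cycle), so the Adding and Swapping method applies verbatim: it produces $2n+1$ distinct permutations in $S_{2n+1}$, each with exactly one $1$-cycle (namely the element that got displaced, or $2n+1$ itself) and all other cycles still even, which is precisely $EE^{(1)}_{2n+1}$; conversely any element of $EE^{(1)}_{2n+1}$ has a unique $1$-cycle, and deleting its entry (after an un-swap) recovers a unique element of $EE^{(1)}_{2n}$ together with the index — so this is a clean bijection and Type $1$ is the easy case.

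Type $3$ is where the real work lies. An element of $EE^{(3)}_{2n}$ has at least one even cycle and at least one odd cycle, and it may or may not have a $1$-cycle, so the Adding and Swapping method cannot be applied uniformly. The plan is to split $EE^{(3)}_{2n}$ according to whether the permutation has a $1$-cycle: on the no-$1$-cycle part, apply Adding and Swapping directly, checking that the $2n+1$ images all lie in $EE^{(3)}_{2n+1}$ (the displaced element becomes a $1$-cycle, but there is still another even cycle and another odd cycle around, so we are not in Type $1$, and parity of even cycles is preserved); on the part with a $1$-cycle, a separate insertion/relabelling argument is needed — likely mark the (or a canonical) $1$-cycle, promote it using the new symbol $2n+1$, and track the $2n+1$-fold multiplicity through which element plays the role of the marked point. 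The main obstacle will be verifying that these two sub-constructions together are a bijection onto all of $EE^{(3)}_{2n+1}$ with no overlap and no omission: one must check that every element of $EE^{(3)}_{2n+1}$ is hit exactly once, carefully separating the images that have a $1$-cycle from those that do not and confirming the count matches $2n+1$ times the domain in each sub-case, while making sure the carve-out "$\setminus EE^{(1)}_{2n+1}$" in Definition \ref{2n+1_types} is exactly accounted for. I expect the parity-of-even-cycles invariance to be routine (the Adding and Swapping operations either fix cycle type or merge/transfer a single point in a controlled way), and the genuine difficulty to be the combinatorial bookkeeping of $1$-cycles in Type $3$.
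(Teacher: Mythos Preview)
Your Type~1 argument is exactly the paper's: elements of $EE^{(1)}_{2n}$ have no $1$-cycles, so the Adding and Swapping map applies cleanly and its image is precisely $EE^{(1)}_{2n+1}$.

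For Types~2 and~3, however, your plan diverges from the paper's and carries a genuine gap. You correctly diagnose that inserting $2n+1$ into an odd cycle destroys oddness, so Adding and Swapping cannot treat Type~2 directly; and for Type~3 you propose splitting on the presence of a $1$-cycle, handling the $1$-cycle-free part with Adding and Swapping, and then doing ``a separate insertion/relabelling argument'' on the remainder. That separate argument is never actually given. Concretely, the image of Adding and Swapping inside $EE^{(3)}_{2n+1}$ is the set of permutations with \emph{exactly one} $1$-cycle and at least one odd cycle of length $\geq 3$; its complement in $EE^{(3)}_{2n+1}$ consists of permutations with either no $1$-cycle or at least two $1$-cycles, and you would need to match this $(2n+1)$-to-$1$ against the with-$1$-cycle part of $EE^{(3)}_{2n}$. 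No mechanism for that is supplied, and ``mark a canonical $1$-cycle and promote it'' does not obviously produce such a map.

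The paper organizes Types~2 and~3 quite differently and supplies the missing idea. First, once Type~1 is established, Lemma~\ref{basiclemma} forces the Type~2 and Type~3 identities to be \emph{equivalent} (this is~(1) in Section~\ref{theorem1type2}), so only one needs a direct proof. Second, the paper does not attempt Adding and Swapping for Type~3 at all. Instead it stratifies $EE^{(3)}_{2n+1}$ by the set $A\subset[2n+1]$ of elements lying in odd cycles and uses the factorization $EE^{(3):A}_{2n+1}\cong EE^{(1)}_{2n-2c}\times EE^{(2)}_{2c+1}$ of Lemma~\ref{EE3A}, where $|A|=2c+1$. Strong induction on $n$ then reduces the count to the Type~2 identity at the smaller size $2c+1$, after which summing over $A$ and over a marked element $a\in A$ yields the Type~3 identity; Type~2 follows from the equivalence. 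This decomposition by $A$ is the key structural move your sketch is missing: it replaces the awkward $1$-cycle bookkeeping with a clean product structure on which the induction closes.
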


Note that: 
\begin{align*}
    \sum_{i\in[3]}\z|EE^{(i)}_{2n+1}\y| &= |EE_{2n+1}|  &\qquad &(\text{By \hyperref[2n+1_types]{Definition 2}})\\
    &= (2n+1)|EE_{2n}| &\qquad & (\text{By \hyperref[basiclemma]{Lemma 3.2}})\\
    &= (2n+1)\sum_{i\in [3]}\z|EE^{(i)}_{2n}\y|.  &\qquad &(\text{By \hyperref[2n_types]{Definition 1}})
\end{align*}

We will show the three types of $EE_n$ separately.

\subsection{Theorem 1: Type 1} \label{type1}
We will first show that $\z|\eeone_{2n+1}\y|= (2n+1)\z|\eeone_{2n}\y|$. 

\begin{proof}
    Note that all cycles in $\eeone_{2n}$ have their cycle lengths greater than $1$ (\hyperref[2n_types]{Definition 1}). 
    
    In this case, we can apply the \nameref{addingswapping}. Each permutation in $\eeone_{2n}$ gets mapped to $(2n+1)$ different permutations in $\eeone_{2n+1}$.
\end{proof}

\subsection{Theorem 1: Type 2 and Type 3}\label{theorem1type2}

\label{1} Building on top of Theorem 1: type 1 in \hyperref[type1]{Section 4.1}, we know that \hfill (1)
$$\mathbf{\z|\eetwo_{2n+1}\y|= (2n+1)\z|\eetwo_{2n}\y|} \iff \mathbf{\z|\eethree_{2n+1}\y|= (2n+1)\z|\eethree_{2n}\y|}.$$

As a result, it is sufficient to prove just one of them.

We now give a proof for $\z|EE_{2n+1}^{(i)}\y|= (2n+1)\z|EE_{2n}^{(i)}\y|$ for $i=2,3$. Additionally, we will prove one of the three types for \hyperref[theorem:2]{Theorem 2}, which is $PS^{(3)}_{2n+1} = (2n+1)PS^{(3)}_{2n}$. 

\begin{definition}\label{definition4}
    Let $n\in \Z^+$ and $\eta \in \eethree_{n}$.
    Define $A_\eta$ to be the set of all elements in odd cycles in $\eta$.
\end{definition}

\begin{definition}
    Let $n\in \Z^+$ and $A\subset [n]$. 
    Define $$EE^{(3):A}_{n} := \z\{\eta \in \eethree_{n}: A_\eta = A\y\}.$$
\end{definition}

\begin{lemma} \label{EE3A}
    If $A\subset [2n+1]$, $|A| = 2c+1$ for some $c\in \Z^+$ and $c < n$.
    Then $$EE^{(3):A}_{2n+1} \cong \z(\eeone_{2n-2c} \times \eetwo_{2c+1}\y).$$
\end{lemma}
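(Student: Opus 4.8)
The plan is to produce an explicit bijection between $EE^{(3):A}_{2n+1}$ and the product set $\eeone_{2n-2c}\times\eetwo_{2c+1}$ by separating each permutation $\eta\in EE^{(3):A}_{2n+1}$ into its action on $A$ and its action on the complement $[2n+1]\setminus A$. First I would fix the set $A$ with $|A|=2c+1$ and let $B := [2n+1]\setminus A$, so $|B| = 2n-2c$. Given $\eta \in EE^{(3):A}_{2n+1}$, by \hyperref[definition4]{Definition \ref{definition4}} the set $A$ is exactly the union of the elements lying in odd cycles of $\eta$; since disjoint-cycle decomposition partitions $[2n+1]$, the remaining elements $B$ are exactly those in even cycles of $\eta$. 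Hence $\eta$ restricts to a permutation $\eta_A$ of $A$ consisting only of odd cycles and a permutation $\eta_B$ of $B$ consisting only of even cycles, and $\eta = \eta_A\,\eta_B$ as a product of disjoint cycles.

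The key steps are then: (1) check that $\eta_A$, viewed as a permutation of the $(2c+1)$-element set $A$, lies in $\eetwo_{2c+1}$ after relabelling $A$ order-isomorphically with $[2c+1]$ — it has only odd cycles, so it vacuously has an even number (zero) of even cycles, hence is in $EE_{2c+1}$, and it is Type 2 by definition; (2) check that $\eta_B$, viewed as a permutation of the $(2n-2c)$-element set $B$, lies in $\eeone_{2n-2c}$ after relabelling $B$ with $[2n-2c]$ — it has only even cycles, and the number of even cycles is even precisely because $\eta\in EE_{2n+1}$ has an even number of even cycles and all of them live in $B$; (3) conversely, given any pair $(\sigma,\tau)\in \eeone_{2n-2c}\times\eetwo_{2c+1}$, relabel $\sigma$ to act on $B$ and $\tau$ to act on $A$, and form the product; the result has both even cycles (from $\sigma$, and there is at least one since $2n-2c\ge 2$ means $\eeone_{2n-2c}$ contains only permutations with at least one even cycle) and odd cycles (from $\tau$, at least the fixed points, and note $2c+1\ge 3$), so it lies in $\eethree_{2n+1}$, it is not of the Type-1 form since it has an odd cycle of length $\ge 3$ or more than one fixed point, and its set of odd-cycle elements is exactly $A$, so it lies in $EE^{(3):A}_{2n+1}$; (4) verify these two maps are mutually inverse, which is immediate since splitting a permutation into its restrictions to a fixed partition $\{A,B\}$ and then recombining returns the original permutation, and vice versa.

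The main obstacle — really the only subtle point — is making sure the Type-3 membership conditions are genuinely respected in both directions, in particular the exclusion $\backslash\,\eeone_{2n+1}$ baked into \hyperref[2n+1_types]{Definition \ref{2n+1_types}}. I would argue that a permutation of the form $\eta_A\eta_B$ with $\eta_B$ nonempty (all even cycles, at least one of them) and $\eta_A$ a permutation of $\ge 3$ points using only odd cycles can never be of Type 1: Type 1 permutations have exactly one odd cycle, a single $1$-cycle, whereas here $|A|=2c+1\ge 3$ forces $\eta_A$ to have either a single odd cycle of length $\ge 3$ or at least three fixed points or a mix — in every case more "odd-cycle content" than a lone $1$-cycle. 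One should also double-check the corner interactions with the hypothesis $c\ge 1$ (so $|A|\ge 3$) and $c<n$ (so $|B|=2n-2c\ge 2$), which are exactly what guarantee $\eetwo_{2c+1}$ and $\eeone_{2n-2c}$ are the right nonempty target sets; I would state these uses explicitly. The parity bookkeeping in steps (2) and (3) — that "even number of even cycles overall" transfers to "even number of even cycles in $\eta_B$" because $\eta_A$ contributes none — is the one place to write a clean sentence rather than wave hands, but it is routine once the partition $\{A,B\}$ is fixed.
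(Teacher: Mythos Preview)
Your proposal is correct and follows exactly the same idea as the paper's proof: split $\eta$ into its restriction to $A$ (the odd-cycle part) and to $B=[2n+1]\setminus A$ (the even-cycle part), relabel each piece, and observe that this gives a bijection to $\eeone_{2n-2c}\times\eetwo_{2c+1}$. The paper's own argument is essentially a two-line sketch of this same decomposition, so your version is simply a more careful execution of it, making explicit the parity transfer and the exclusion of Type~1 via $|A|\ge 3$ that the paper leaves implicit.
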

    \begin{proof}
        Assume $A\subset [2n+1]$, $|A| = 2c+1$ for some $c\in \Z^+$ and $c < n$.
        Then \begin{align*}
            EE^{(3):A}_{2n+1}
            &= \z\{\eta \in \eethree_{2n+1}: A_\eta = A\y\}\\
            &= \z\{\eta\in S_{2n+1}: \z\{i\in [2n+1]: \text{the length of the cycle containing $i$ is odd}\y\} = A\y\}\\
            &\cong \z(\eeone_{2n-2c} \times \eetwo_{2c+1}\y).
        \end{align*}
        
    The insight behind the last line is that, instead of focusing on the partition of the entire $EE^{(3)
    }_{2n+1}$, we examine each permutation in two separate parts: the partition of even cycles and the partition of odd cycles. Referring to the three types of $EE_{2n+1}$ in \hyperref[2n+1_types]{Definition 2}, the partition of only odd or only even cycles can be found in $\eeone_{2n}$ or $\eetwo_{2n+1}$, respectively.

    \begin{example}
        Take $A = \{5,6,7\}$ and $(1,2)(3,4)(5,6,7) \in EE^{(3):A}_{7}$. 

        This permutation has a partition $(2,2,3)$. By focusing on the partitions of even and odd cycles separately, we see $(1,2)(3,4)$ as $(2,2)$, a partition of $\eeone_{4}$, and $(5,6,7)$ as $(3)$, a partition of $\eetwo_3$.
    \end{example}
    \end{proof}

\begin{proposition}\label{proposition}
    Let $n\in \Z^+$. Then 
    \begin{enumerate}
        \item  $\z|\eetwo_{2n+1}\y|= (2n+1)\z|\eetwo_{2n}\y|$
        \item $\z|\eethree_{2n+1}\y|= (2n+1)\z|\eethree_{2n}\y|$
    \end{enumerate}
\end{proposition}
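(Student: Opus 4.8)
The plan is to prove part 1 (the statement $|\eetwo_{2n+1}| = (2n+1)|\eetwo_{2n}|$), since by the equivalence recorded in (1) of Section~\ref{theorem1type2}, part 2 follows immediately from part 1 together with the Type~1 case already established in Section~\ref{type1} and the global identity $\sum_{i\in[3]}|EE^{(i)}_{2n+1}| = (2n+1)\sum_{i\in[3]}|EE^{(i)}_{2n}|$ derived right after the statement of Theorem~\ref{theorem:1}. So the real content is a single bijection-style count for the ``only odd cycles'' type.

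First I would observe that a permutation in $\eetwo_m$ (for $m = 2n$ or $m = 2n+1$) is exactly a permutation all of whose cycles have odd length, with the parity constraint that it has an even number of even cycles automatically satisfied (there are zero even cycles, and $0$ is even). So $\eetwo_m$ is simply the set of permutations of $[m]$ into odd cycles only. The key structural fact I want is a recursion in the spirit of the \nameref{addingswapping}: given $w\in \eetwo_{2n}$, note $w$ may contain $1$-cycles, so the raw Adding-and-Swapping map from Section~\ref{addingswapping} does not directly apply (it requires no $1$-cycles). Instead I would track what happens to the element $2n+1$ when we build a permutation of $[2n+1]$ with all cycles odd: the cycle containing $2n+1$ has some odd length $2k+1$, determined by choosing the other $2k$ elements of that cycle and arranging them, and the remaining $2n-2k$ elements must themselves form a permutation with all cycles odd — but $2n - 2k$ is even, so this is an element of $\eetwo_{2n-2k}$. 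This gives
\[
|\eetwo_{2n+1}| \;=\; \sum_{k=0}^{n} \binom{2n}{2k}\,(2k)!\,\cdot\,|\eetwo_{2n-2k}|,
\]
using that the number of ways to form a single $(2k+1)$-cycle through $2n+1$ on a chosen $(2k+1)$-element set is $(2k)!$. One then needs the analogous bookkeeping for $\eetwo_{2n}$ — either a matching recursion or an exponential generating function identity — and to check the ratio is exactly $2n+1$.

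The cleanest route, and the one I would actually push through, is exponential generating functions: if $F(x) = \sum_{m\ge 0} |\{\text{perms of }[m]\text{ with all cycles odd}\}|\,\tfrac{x^m}{m!}$, then by the exponential formula $F(x) = \exp\!\big(\sum_{j\text{ odd}} \tfrac{x^j}{j}\big) = \exp\!\big(\tfrac12\log\tfrac{1+x}{1-x}\big) = \sqrt{\tfrac{1+x}{1-x}}$. Since $\eetwo_m$ collects exactly these permutations, one checks $F'(x) = \tfrac{1}{1-x^2}\,F(x)$, equivalently $(1-x^2)F'(x) = F(x)$; extracting the coefficient of $x^{2n}/(2n)!$ from this ODE yields a relation that, after matching indices, gives precisely $|\eetwo_{2n+1}| = (2n+1)|\eetwo_{2n}|$. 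The main obstacle I anticipate is not the analysis but staying faithful to the paper's announced \emph{combinatorial} program: the authors clearly want a bijective argument (they have set up $EE^{(3):A}$ and Lemma~\ref{EE3A} presumably to feed into exactly this proposition), so the honest plan is to realize the recursion above as an explicit bijection — decompose $w \in \eetwo_{2n+1}$ by deleting the cycle containing $2n+1$, and show that the pair (that cycle, the rest) is equinumerous with $[2n+1] \times \eetwo_{2n}$ by a careful reinsertion map in the style of the Adding-and-Swapping Method, handling the $1$-cycle-of-$2n+1$ case (which is the identity-map part, contributing the factor corresponding to keeping $2n+1$ fixed) separately from the case where $2n+1$ sits in a longer odd cycle. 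Reconciling this cycle-surgery bijection with the $1$-cycle issue that blocks the naive Adding-and-Swapping map is the delicate step; everything else is routine counting.
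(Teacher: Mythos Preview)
Your proposal is workable but takes a route essentially opposite to the paper's. You aim to prove Type~2 directly (via the EGF $F(x)=\sqrt{(1+x)/(1-x)}$ or a cycle-through-$2n+1$ surgery) and then deduce Type~3 from the equivalence~(1). The paper instead proves Type~3 first, by \emph{strong induction} using exactly the machinery you noticed but did not deploy: fixing $A\subset[2n+1]$ with $|A|=2c+1$, Lemma~\ref{EE3A} factors $EE^{(3):A}_{2n+1}\cong \eeone_{2n-2c}\times\eetwo_{2c+1}$; the inductive hypothesis for Type~2 at the smaller index $c$ replaces $|\eetwo_{2c+1}|$ by $(2c+1)|\eetwo_{2c}|$, which matches $(2c+1)\,|\{\eta\in\eethree_{2n}:A_\eta=A\setminus\{a\}\}|$. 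Summing over $A$, reordering the sums, and using symmetry in the choice of $a$ gives $|\eethree_{2n+1}|=(2n+1)|\eethree_{2n}|$; then~(1) yields Type~2 at level $n$, closing the induction. So the paper never touches a direct bijection on $\eetwo$, and never needs the delicate $1$-cycle reinsertion you flagged.

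Two smaller points on your route. First, the ODE $(1-x^2)F'(x)=F(x)$ does not give the two-term ratio in one shot: extracting coefficients yields the three-term recursion $a_{m+1}=a_m+m(m-1)a_{m-1}$, and you still need the companion identity $a_{2n}=(2n-1)a_{2n-1}$ (or an induction) to collapse it to $a_{2n+1}=(2n+1)a_{2n}$. Second, your bijective sketch (delete the odd cycle through $2n+1$, then reinsert) does not obviously produce a map onto $[2n+1]\times\eetwo_{2n}$, since the residual permutation lives on $2n-2k$ elements rather than $2n$; turning that recursion into an honest bijection with the stated target is genuinely nontrivial and is precisely what the paper sidesteps by routing everything through Type~3 and Lemma~\ref{EE3A}. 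Your approach buys a self-contained proof of the Type~2 identity (indeed the closed form $a_{2n}=((2n-1)!!)^2$ drops out of $F$), at the cost of leaving the combinatorial program; the paper's approach buys a uniform bijective treatment that also immediately yields the $PS^{(3)}$ statement needed for Theorem~\ref{theorem:2}, since the induction only manipulates the odd-cycle part.
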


We will use strong induction to show some interim results and then bijectively prove the \hyperref[proposition]{Proposition}. Furthermore, we will present a partial result for Theorem 2. 

\begin{proof}
    Base case: $\z|\eetwo_{3}\y| = 3 = 3\z|\eetwo_2\y|$.
    
    Assume $\z|EE^{(i)}_{2k+1}\y| = (2k+1)\z|EE^{(i)}_{2k}\y|,$ for all $k\in \Z^+, 1\leq k\leq n-1$ and $i = 2,3.$
    
    Fix $a\in A \subset [2n+1]$, $|A| = 2c+1, 1\leq c \leq n-1.$
    
    By induction hypothesis, 
    $(k=c,\  i=2),$ $\z|\eetwo_{2c+1}\y|= (2c+1)\z|\eetwo_{2c}\y|.$
    
    We use strong induction to show that \hfill(2) \label{(2)}$$\mathbf{\underset{{\overset{\eta \in \eethree_{2n+1}}{A_\eta = A}}}{\sum} 1 = (2c+1) \underset{{\overset{\eta \in \eethree_{2n}}{A_\eta = A \backslash \{a\}}}}{\sum} 1.}$$  

    Moreover, $$\mathbf{\underset{{\overset{\eta \in PS^{(3)}_{2n+1}}{A_\eta = A}}}{\sum} 1 = (2c+1) \underset{{\overset{\eta \in PS^{(3)}_{2n}}{A_\eta = A \backslash \{a\}}}}{\sum} 1.}$$

    For LHS: 
\begin{align*}
    \underset{\substack{\eta \in \eethree_{2n+1} \\ A_\eta = A}}{\sum} 1 &= \z|EE^{(3):A}_{2n+1}\y|\\
    &= \z|\eeone_{2n-2c} \times \eetwo_{2c+1}\y| &\qquad &(\text{By \hyperref[EE3A]{Lemma 4.1}})\\
    &= \z|\eeone_{2n-2c} \times [2c+1] \times \eetwo_{2c}\y| &\qquad &(\text{By induction hypothesis})\\
    &= (2c+1) \left| \eeone_{2n-2c} \right| \left| \eetwo_{2c} \right|.
\end{align*}

    For RHS:
    \begin{align*}
        (2c+1) \underset{{\overset{\eta \in \eethree_{2n}}{A_\eta = A \backslash \z\{a\y\}}}}{\sum} 1 &= (2c+1) \z|\z\{\eta \in S_{2n}:
             A_\eta = A \backslash \z\{a\y\}\y\} \y|\\
        &=(2c+1)\z|\eeone_{2n-2c}\y|\z|\eetwo_{2c}\y|.
    \end{align*}
   
   Note that $\eethree_{2n+1} = PS_{2n+1}^{(3)} \cup \z(NPS_{2n+1} \cap \eethree_{2n+1}\y)$. In the proof above, we only apply induction to all odd cycles in $\eethree_{2n+1}$, without loss of generality (by \hyperref[PSdependence]{Remark 1.2}): $$\underset{{\overset{\eta \in PS^{(3)}_{2n+1}}{A_\eta = A}}}{\sum} 1 = (2c+1) \underset{{\overset{\eta \in PS^{(3)}_{2n}}{A_\eta = A \backslash \{a\}}}}{\sum} 1.$$

\vspace{2em}

We will now bijectively show that:
$$\z|\eethree_{2n+1}\y|= (2n+1)\z|\eethree_{2n}\y|.$$ 

Moreover,
$$\z|PS^{(3)}_{2n+1}\y|= (2n+1)\z|PS^{(3)}_{2n}\y|.$$

For LHS: 
\begin{align*}
     \left|\eethree_{2n+1}\right| &= \sum_{\substack{A \subset [2n+1]  \\ 1 < |A| < 2n+1}} \sum_{a\in A} \sum_{\substack{\eta \in \eethree_{2n} \\ A_\eta = A \backslash \{a\}}} 1 &\qquad &(\text{By \hyperref[(2)]{(2)} })\\
    &= \sum_{a\in [2n+1]} \sum_{\substack{A \subset [2n+1] \\ 1 < |A| < 2n+1\\ a\in A}} \sum_{\substack{\eta \in \eethree_{2n} \\ A_\eta = A \backslash \{a\}}} 1 &\qquad &(\text{Change the order})\\
    &= (2n+1) \sum_{\substack{A \subset [2n+1] \\ 1 < |A| < 2n+1\\ 2n+1\in A}} \sum_{\substack{\eta \in \eethree_{2n} \\ A_\eta = A \backslash \{2n+1\}}} 1. &\qquad &(\text {By symmetry})
\end{align*}

    For RHS:
    \begin{align*}
        (2n+1)\z|\eethree_{2n}\y| = (2n+1)\sum_{\substack{B \subset [2n] \\ 1 < |B| < 2n}} \sum_{\substack{\eta \in \eethree_{2n} \\ A_\eta = B}} 1.
    \end{align*}
    
    Using change of variable such that: $B = A \backslash \{2n+1\}$ and $A = B \cup \{2n+1\}$, we know that, $\z|\eethree_{2n+1}\y|= (2n+1)\z|\eethree_{2n}\y|$, which (according to \hyperref[1]{(1)}) implies the truth of  $\z|\eetwo_{2n+1}\y|= (2n+1)\z|\eetwo_{2n}\y|$.

    The bijective proof above focuses only on the odd cycles of each permutation in $\eethree_{2n+1}$. Therefore, by \hyperref[PSdependence]{Remark 1.2}, the same method is applicable to $PS^{(3)}_{2n+1} \subset \eethree_{2n+1}$. Thus,  $\z|PS^{(3)}_{2n+1}\y|= (2n+1)\z|PS^{(3)}_{2n}\y|$.

\end{proof}

\section{Proof of Theorem 2}\label{theorem2}
\setstretch{1.5}

In this section, we prove Theorem 2. 
\begin{theorem}
    \label{theorem:2}
    Let $n\in \Z^+$ and $i\in [3]$. Then
    $$\mathbf{\z|PS^{(i)}_{2n+1}\y| = (2n+1)\z|PS^{(i)}_{2n}\y|}.$$
\end{theorem}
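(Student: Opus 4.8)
The plan is to reduce Theorem 2 to the three types exactly as Theorem 1 was handled, exploiting the key structural fact (Remark 1.2) that membership in $PS$ depends only on the multiset of even cycle lengths. For type 3 the statement $\z|PS^{(3)}_{2n+1}\y| = (2n+1)\z|PS^{(3)}_{2n}\y|$ has already been established inside the proof of the Proposition in Section~\ref{theorem1type2}, so nothing remains to do there; I would simply cite it. For type 2, the sets $\eetwo_{n}$ and $PS^{(2)}_{n}$ are related by: a permutation with only odd cycles is automatically a perfect square (Remark 1.1), so in fact $PS^{(2)}_{n} = \eetwo_{n}$, and Theorem 1 type 2 gives the result immediately. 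Thus the only genuinely new case is type~1.

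For type~1 I would argue as follows. A permutation in $\eeone_{2n}$ consists solely of even cycles, and it lies in $PS^{(1)}_{2n}$ iff every even cycle length occurs an even number of times (Remark 1.2 and the discussion following it). Now apply the Adding and Swapping Mapping Method: since every cycle of $w \in \eeone_{2n}$ has length $>1$, each of the $2n+1$ maps $D_i$ is defined, and $D_i(w)$ is obtained from $w$ either by relabelling one entry (for $1\le i\le 2n$) or by adjoining the fixed point $(2n+1)$ (for $i=2n+1$); in all cases the resulting permutation has the same multiset of even cycle lengths as $w$, plus one new $1$-cycle, so $D_i(w)\in \eeone_{2n+1}$, and $D_i(w)\in PS^{(1)}_{2n+1}$ iff $w\in PS^{(1)}_{2n}$. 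Hence the Adding and Swapping map restricts to a map from $PS^{(1)}_{2n}$ onto a $(2n+1)$-to-one-image inside $PS^{(1)}_{2n+1}$; since the $2n+1$ images of a fixed $w$ are distinct and every element of $\eeone_{2n+1}$ arises from a unique $w\in\eeone_{2n}$ via a unique $D_i$ (this is the content of Section~\ref{type1}), the same bijection $\eeone_{2n+1}\leftrightarrow [2n+1]\times\eeone_{2n}$ carries $PS^{(1)}_{2n+1}$ exactly onto $[2n+1]\times PS^{(1)}_{2n}$. Counting gives $\z|PS^{(1)}_{2n+1}\y| = (2n+1)\z|PS^{(1)}_{2n}\y|$.

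Finally I would assemble the three cases and deduce Corollary~\ref{corollary} by summing: $\alpha(2n+1) = \sum_{i\in[3]}\z|PS^{(i)}_{2n+1}\y| = (2n+1)\sum_{i\in[3]}\z|PS^{(i)}_{2n}\y| = (2n+1)\alpha(2n)$, using that $PS_n\subset EE_n$ (Lemma~\ref{PSisEE}) so the three types partition $PS_n$ just as they partition $EE_n$.

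I expect the main obstacle to be making the type~1 argument airtight rather than hand-wavy: one must verify carefully that the Adding and Swapping Mapping Method does preserve the even-cycle-length multiset under the swap operation $D_i$ with $1\le i\le 2n$ (the swapped entry $i$ sits in some cycle, and replacing $i$ by $2n+1$ in that cycle while isolating $i$ as a fixed point changes only the labels, not the lengths), and that the correspondence is genuinely a bijection $\eeone_{2n+1}\leftrightarrow[2n+1]\times\eeone_{2n}$ — i.e. that from any $v\in\eeone_{2n+1}$ one can uniquely recover the pair $(i,w)$. Everything else is bookkeeping that follows from Remark 1.2 and the already-proved Theorem~1.
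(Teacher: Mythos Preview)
Your proposal is correct and follows essentially the same route as the paper: Type~3 is cited from Section~\ref{theorem1type2}, Type~2 is handled by the identity $PS^{(2)}_n=\eetwo_n$ plus Theorem~1, and Type~1 reuses the Adding and Swapping Mapping Method from Section~\ref{type1}. If anything, your Type~1 argument is more explicit than the paper's (which simply asserts that the same method works), so your care in checking that the swap preserves the even-cycle-length multiset is well placed.
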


The proof of this theorem is very similar to the \nameref{proof of theorem 1}. We will prove the three types individually.  

\subsection{Theorem 2: Type 1}
We will show that $\z|PS^{(1)}_{2n+1}\y| = (2n+1)\z|PS^{(1)}_{2n}\y|$.

\begin{proof}
    Note that $PS_n^{(1)}\subset \eeone_n$ (by \hyperref[definition3]{Definition 3}), applying the same method, \nameref{addingswapping} presented in \hyperref[type1]{Section 4.1}, we confirm that $\z|PS^{(1)}_{2n+1}\y| = (2n+1)\z|PS^{(1)}_{2n}\y|$.
\end{proof}

\subsection{Theorem 2: Type 2}
We will show that $\z|PS^{(2)}_{2n+1}\y| = (2n+1)\z|PS^{(2)}_{2n}\y|$. 

Instead of proving this part of Theorem 2 directly, we will demonstrate an equivalent relationship, which once established, will imply the truth of $\z|PS^{(2)}_{2n+1}\y| = (2n+1)\z|PS^{(2)}_{2n}\y|$.
\begin{proof}  
    We will show that $PS_n^{(2)} = \eetwo_n$. 

    \begin{enumerate}
        \item $PS_n^{(2)} \subset \eetwo_n$: By \hyperref[definition3]{Definition 3}, $PS_n^{(2)}\subset \eetwo_n$.
        \item $\eetwo_n \subset PS_n^{(2)}$: By \hyperref[oddPS]{Remark 1.1}, it is clear that every odd cycle is a perfect square. Since every permutation in $\eetwo_n$ is a product of \textbf{all odd} cycles (recall \hyperref[2n_types]{Definition 1} and \hyperref[2n+1_types]{Definition 2}), every permutation in $\eetwo_n$ is also in $PS_n^{(2)}$. Thus, $\eetwo_n \subset PS^{(2)}_n$.
    \end{enumerate}
    Now that we use the result from \hyperref[theorem:1]{Theorem 1}, we prove that $$\z|PS^{(2)}_{2n+1}\y| = \z|\eetwo_{2n+1}\y| = (2n+1)\z|\eetwo_{2n}\y| = (2n+1)\z|PS^{(2)}_{2n}\y|.$$

\end{proof}

\subsection{Theorem 2: Type 3}

The proof of $\z|PS^{(3)}_{2n+1}\y|= (2n+1)\z|PS^{(3)}_{2n}\y|$ has been done in \hyperref[theorem1type2]{Section 4.2}. 

\vspace{2em}

\subsection{Corollary}
In this section, we prove the corollary.  

\begin{corollary}
    $\mathbf{\alpha(2n+1) = (2n+1)\alpha(2n)}$.
\end{corollary}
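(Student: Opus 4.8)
The plan is to deduce the corollary directly from Theorem~\ref{theorem:2} by summing over the three types. First I would recall that $PS_n \subset EE_n$ (by \hyperref[PSisEE]{Lemma 1.2}), and that $EE_n$ is partitioned into the three types $\eeone_n$, $\eetwo_n$, $\eethree_n$ (by \hyperref[2n_types]{Definition 1} and \hyperref[2n+1_types]{Definition 2}), so intersecting this partition with $PS_n$ yields $PS_n = PS_n^{(1)} \sqcup PS_n^{(2)} \sqcup PS_n^{(3)}$, where the union is disjoint by construction. Taking cardinalities gives
\begin{align*}
    \alpha(n) = |PS_n| = \sum_{i\in[3]} \z|PS^{(i)}_n\y|.
\end{align*}

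Next I would apply Theorem~\ref{theorem:2} termwise. For each $i\in[3]$ we have $\z|PS^{(i)}_{2n+1}\y| = (2n+1)\z|PS^{(i)}_{2n}\y|$, so summing over $i$ and factoring out $(2n+1)$ gives
\begin{align*}
    \alpha(2n+1) = \sum_{i\in[3]} \z|PS^{(i)}_{2n+1}\y| = (2n+1)\sum_{i\in[3]} \z|PS^{(i)}_{2n}\y| = (2n+1)\,\alpha(2n),
\end{align*}
which is exactly the claim. The only mild subtlety to check is that the type decomposition genuinely covers all of $PS_n$ with no overlaps — i.e. that Definitions~1 and~2 really do partition $EE_n$ (the ``Note that'' lines in those definitions assert this) and that $PS_n$ contains no permutation outside $EE_n$, which is precisely \hyperref[PSisEE]{Lemma 1.2}. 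There is no real obstacle here: all the work has already been done in Theorems~1 and~2, and this corollary is just the bookkeeping step that reassembles the three cases into Stanley's conjecture. The one place to be slightly careful is the edge behavior of the type definitions for small $n$ (for instance whether $\eeone$ or $\eethree$ can be empty), but emptiness of a type is harmless since the identity $0 = (2n+1)\cdot 0$ holds trivially and the sum-over-types identity is unaffected.
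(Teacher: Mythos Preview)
Your proposal is correct and follows essentially the same approach as the paper: decompose $\alpha(n) = \sum_{i\in[3]}|PS_n^{(i)}|$ via the type partition, apply Theorem~\ref{theorem:2} termwise, and sum. You supply a bit more justification for the disjointness of the $PS_n^{(i)}$ and the edge cases than the paper does, but the argument is the same bookkeeping step.
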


\begin{proof}
    At the beginning of this paper, we state that $\alpha(n)$ represents the number of perfect square permutations in $S_n$. Thus, we have $\alpha(n) = \underset{i \in [3]}{\sum} \z|PS_n^{(i)}\y|$.

    Therefore, by \hyperref[theorem:2]{Theorem 2}, 
    \begin{align*}
        \alpha(2n+1) = \sum_{i\in [3]}\z|PS^{(i)}_{2n+1}\y| = (2n+1)\sum_{i \in [3]}\z|PS^{(i)}_{2n}\y| = (2n+1)\alpha(2n).
    \end{align*}
\end{proof}

\section{Additional Exploration of Permutations with Odd Number of Even Cycles }
\setstretch{1.5}

After presenting our main results that are related to $EE_n$, we now explore the possible types in $OE_n$. Similar to \hyperref[theorem:1]{Theorem 1}, we will show that there are three distinct types for both $OE_{2n}$ and $OE_{2n+1}$, which correspond to each other. 

\begin{definition}
    Let $n\in \Z^+$. We divide $OE_{2n}$ into three \textbf{distinct} types:
    \begin{itemize}
        \item Type 1, denoted $OE_{2n}^{(1)}:=\{w\in OE_{2n}: w$ only has even cycles\}.
        \item Type 2, denoted $OE_{2n}^{(2)}:=\{w\in OE_{2n}:w$ has no $1$-cycle\} $\backslash OE_{2n}^{(1)}$.
        \item Type 3, denoted $OE_{2n}^{(3)}:=\{w\in OE_{2n}:w$ has at least one $1$-cycle\}. 
    \end{itemize}
    Note that: 
    $$\z|OE_{2n}\y| = \underset{i\in [3]}{\sum} \z|OE_{2n}^{(i)}\y|.$$
\end{definition}

\vspace{1em}

\begin{definition}
    Let $n\in \Z^+$. We can divide $OE_{2n+1}$ into three \textbf{distinct} types:
    \begin{itemize}
        \item Type 1, denoted $OE_{2n+1}^{(1)}:=\{w\in OE_{2n+1}:w$ only has even cycles and one $1$-cycle\}.
        \item Type 2, denoted $OE_{2n+1}^{(2)}:=\{w\in OE_{2n+1}:w$ only has one $1$-cycle\} $\backslash OE_{2n+1}^{(1)}$.
        \item Type 3, denoted $OE_{2n+1}^{(3)}:= OE_{2n+1}$ $\backslash \z(OE_{2n+1}^{(2)} \cup OE_{2n+1}^{(1)}\y)$. 
    \end{itemize}
    Note that: 
    $$\z|OE_{2n+1}\y| = \underset{i\in [3]}{\sum} \z|OE_{2n+1}^{(i)}\y|.$$
\end{definition}

\vspace{1em}

We will show that $\z|OE_{2n+1}^{(i)}\y| = (2n+1)\z|OE_{2n}^{(i)}\y|$, for all $n\in \Z^+$ and $i \in [3]$.

\begin{proof}
    By applying the \hyperref[addingswapping]{Adding and Swapping Mapping Method}, we can effortlessly confirm that $$\z|OE_{2n+1}^{(i)}\y| = (2n+1)\z|OE_{2n}^{(i)}\y|\text{, for } i = 1, 2.$$
    \noindent By \hyperref[remark3.1]{Remark 3.1}, we confirm that $|OE_{2n+1}| = (2n+1)|OE_{2n}|$. Therefore, $$\z|OE_{2n+1}^{(3)}\y| = (2n+1) \z|EE_{2n}^{(3)}\y|.$$

\end{proof}

\section{Further Remarks}
\setstretch{1.5}
We hope the methods demonstrated in this paper will be beneficial for future research in this area. We are particularly interested in identifying more potential types within $S_{2n}$ and $S_{2n+1}$ that may provide special formulations similar to those proposed in \hyperref[theorem:1]{Theorem 1} and \hyperref[theorem:2]{Theorem 2}. Additionally, it will be very interesting to see if our methods can be applied when dealing with higher powers.

\section{Acknowledgements}
\setstretch{1.5}
 I wish to extend my sincere appreciation to my supervisor, Jiyuan (Maki) Lu, for his support, guidance, and mentorship throughout this research. His insightful feedback was crucial to the completion of this paper. I also want to express my gratitude to Professor Richard P. Stanley for the formulation that is provided. Furthermore, I am grateful to my research group member, Parham Tayyebi, for the insightful discussions that have enriched this work.

\printbibliography
\end{document}